\title[An Arzel{\`a}-Ascoli theorem]{An Arzel{\`a}-Ascoli theorem for the Hausdorff measure of noncompactness}
\author{Ben Berckmoes}
\address{Departement Wiskunde-Informatica, Middelheimcampus, Middelheimlaan 1, 2020 Antwerp, Belgium}
\email{ben.berckmoes@ua.ac.be}
\date{}
\subjclass[2000]{46B50}
\keywords{Arzel{\`a}-Ascoli theorem, Hausdorff measure of noncompactness, measure of non-uniform equicontinuity, Chebyshev radius}
\thanks{The author is PhD fellow at the Fund for Scientific Research of Flanders (FWO)}
\DeclareMathOperator*{\myinf}{in\vphantom{p}f}
\begin{document}

\maketitle

\hyphenation{ex-ten-da-ble}
\hyphenation{Pro-po-si-tion}
\hyphenation{Ge-bie-te}

\newtheorem{pro}{Proposition}[section]
\newtheorem{lem}[pro]{Lemma}
\newtheorem{thm}[pro]{Theorem}
\newtheorem{de}[pro]{Definition}
\newtheorem{co}[pro]{Comment}
\newtheorem{no}[pro]{Notation}
\newtheorem{vb}{Example}
\newtheorem{vbn}[pro]{Examples}
\newtheorem{gev}[pro]{Corollary}

\begin{abstract}
We generalize the Arzel{\`a}-Ascoli theorem in the space of continuous maps on a compact interval with values in Euclidean $N$-space by providing a quantitative link between the Hausdorff measure of noncompactness in this space and a natural measure of non-uniform equicontinuity. The proof hinges upon a classical result of Jung's on the Chebyshev radius.
\end{abstract}

\section{Introduction and statement of the main result}

Fix $N \in \mathbb{N}_0$ and let $\mathcal{C} = \mathcal{C}\left(\left[a,b\right],\mathbb{R}^N\right)$ be the space of continuous $\mathbb{R}^N$-valued maps on the compact interval $\left[a,b\right]$. Let $\left|\cdot\right|$ stand for the Euclidean norm on $\mathbb{R}^N$ and recall that a set $\mathcal{F} \subset \mathcal{C}$ is said to be
\begin{enumerate}
	\item {\em uniformly bounded} iff there exists a universal constant $M > 0$ such that $\left|f(x)\right| \leq M$ for all $f \in \mathcal{F}$ and $x \in \left[a,b\right]$,
	\item {\em uniformly relatively compact} iff each sequence in $\mathcal{F}$ contains a subsequence converging uniformly to a map in $\mathcal{C}$, 
	\item {\em uniformly equicontinuous} iff for each $\epsilon > 0$ there exists $\delta > 0$ such that $\left|f(x) - f(y)\right| < \epsilon$ for all $f \in \mathcal{F}$ and $x,y \in \left[a,b\right]$ with $\left|x - y\right| < \delta$.
	\end{enumerate}
Denote the collection of uniformly bounded sets in $\mathcal{C}$ as $\mathcal{B}_\mathcal{C}$. In this setting the following theorem is a classic (\cite{L93}).

\begin{thm}\label{AA}(Arzel{\`a}-Ascoli)
For $\mathcal{F} \in \mathcal{B}_{\mathcal{C}}$ the following are equivalent:
\begin{enumerate}
	\item $\mathcal{F}$ is uniformly relatively compact.
	\item $\mathcal{F}$ is uniformly equicontinuous.
\end{enumerate}
\end{thm}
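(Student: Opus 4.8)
The plan is to prove the two implications separately, working in the complete metric space $\mathcal{C}$ equipped with the supremum norm and using the sequential characterization of relative compactness throughout.

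For (1) $\Rightarrow$ (2), I would argue by contraposition. Suppose $\mathcal{F}$ is not uniformly equicontinuous; then there exist $\epsilon > 0$ and sequences $(f_n)$ in $\mathcal{F}$, $(x_n),(y_n)$ in $[a,b]$ with $\left|x_n - y_n\right| < 1/n$ yet $\left|f_n(x_n) - f_n(y_n)\right| \geq \epsilon$. Uniform relative compactness yields a subsequence $(f_{n_k})$ converging uniformly to some $f \in \mathcal{C}$, and compactness of $[a,b]$ lets me pass to a further subsequence along which $x_{n_k} \to x_0$, hence also $y_{n_k} \to x_0$. Splitting $f_{n_k}(x_{n_k}) - f_{n_k}(y_{n_k})$ through $f(x_0)$ via a three-term estimate and invoking uniform convergence together with continuity of $f$ at $x_0$ forces the left-hand side to $0$, contradicting the lower bound $\epsilon$.

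For (2) $\Rightarrow$ (1), fix an arbitrary sequence $(f_n)$ in $\mathcal{F}$ and a countable dense set $D = \{q_1, q_2, \dots\} \subset [a,b]$. Since $\mathcal{F} \in \mathcal{B}_{\mathcal{C}}$, each $(f_n(q_j))_n$ is bounded in $\mathbb{R}^N$, so the Bolzano--Weierstrass theorem permits the usual nested extraction of subsequences, one for each $q_j$; the diagonal subsequence $(g_k)$ then converges at every point of $D$. Next I would show $(g_k)$ is uniformly Cauchy: given $\epsilon > 0$, choose $\delta > 0$ from uniform equicontinuity, cover $[a,b]$ by finitely many intervals of length $<\delta$ each meeting $D$, and run the standard $\epsilon/3$ argument, using equicontinuity to control the passage from a point to a nearby node and pointwise convergence at the finitely many nodes to control the rest. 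Completeness of $\mathcal{C}$ under the sup norm then produces a uniform limit $f$, and $f \in \mathcal{C}$ because a uniform limit of continuous maps is continuous.

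The Bolzano--Weierstrass and diagonal bookkeeping and the $\epsilon/3$ chase are routine; the one place demanding genuine care is the interplay, in (2) $\Rightarrow$ (1), between the finite subcover of $[a,b]$ furnished by total boundedness and the uniform modulus supplied by equicontinuity, since this is precisely what upgrades pointwise convergence on a dense set to uniform convergence. In (1) $\Rightarrow$ (2) the only subtlety is to perform the two extractions --- first for the functions, then for the points --- in the right order so that the limiting estimate closes.
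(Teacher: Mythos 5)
Your proof is correct: it is the classical sequential argument, with the contrapositive/double-extraction step for (1) $\Rightarrow$ (2) and the Bolzano--Weierstrass diagonal extraction plus $\epsilon/3$-chase for (2) $\Rightarrow$ (1), and both implications close as you describe. Note, however, that the paper does not prove Theorem \ref{AA} at all --- it cites it as classical (from \cite{L93}) and instead proves the quantitative Theorem \ref{AAQ}, from which Theorem \ref{AA} drops out immediately: since $\mathcal{F}$ is uniformly relatively compact iff $\mu_{\textrm{\upshape{H}}}(\mathcal{F}) = 0$ and uniformly equicontinuous iff $\mu_{\textrm{\upshape{uec}}}(\mathcal{F}) = 0$, the two-sided inequality $\frac{1}{2}\mu_{\textrm{\upshape{uec}}} \leq \mu_{\textrm{\upshape{H}}} \leq \left(\frac{N}{2N+2}\right)^{1/2}\mu_{\textrm{\upshape{uec}}}$ forces the two quantities to vanish together. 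Measured against that implicit route, your argument differs in both directions: where you use a diagonal subsequence over a countable dense set to establish relative compactness, the paper constructs an explicit finite $\epsilon$-net of piecewise-linear functions whose node values come from a finite net in $\mathbb{R}^N$ and whose shape is governed by the Chebyshev centers of the sets $f(I_k)$ (via Jung's Theorem \ref{J}); and where you argue (1) $\Rightarrow$ (2) by contradiction, the paper transfers a modulus of continuity from a finite $\alpha$-net $\mathcal{F}_0$ to all of $\mathcal{F}$ at the cost of an additive $2\alpha$. Your approach is shorter and needs no geometry of $\mathbb{R}^N$; the paper's buys the sharp constants $\frac{1}{2}$ and $\left(\frac{N}{2N+2}\right)^{1/2}$, which a compactness-by-contradiction argument cannot produce.
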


Recall that $\mathcal{C}$ is a Banach space under the supremum norm 
\begin{displaymath}
\|f\|_\infty = \sup_{x \in \left[a,b\right]} \left|f(x)\right|
\end{displaymath}
and that for a set $\mathcal{F} \in \mathcal{B}_{\mathcal{C}}$ the {\em Hausdorff measure of noncompactness} (\cite{BG80},\cite{WW96}) is given by
\begin{displaymath}
\mu_{\textrm{\upshape{H}}}(\mathcal{F}) = \myinf_{\mathcal{F}_0} \sup_{f \in \mathcal{F}} \myinf_{g \in \mathcal{F}} \|f - g\|_{\infty},
\end{displaymath}
the first infimum running through all finite sets $\mathcal{F}_0$ in $\mathcal{C}$. It is well known that $\mathcal{F}$ is uniformly relatively compact if and only if $\mu_{\textrm{\upshape{H}}}(\mathcal{F}) = 0$.

For a set $\mathcal{F} \in \mathcal{B}_{\mathcal{C}}$ we define the {\em measure of non-uniform equicontinuity}  as 
\begin{displaymath}
\mu_{\textrm{\upshape{uec}}}(\mathcal{F}) = \myinf_{\delta > 0} \sup_{f \in \mathcal{F}} \sup_{\left|x - y\right| < \delta} \left|f(x) - f(y)\right|,
\end{displaymath}
the second supremum running through all $x,y \in \left[a,b\right]$ with $\left|x - y\right| < \delta$. It is clear that $\mathcal{F}$ is uniformly equicontinuous if and only if $\mu_{\textrm{\upshape{uec}}}(\mathcal{F}) = 0$. 
In \cite{BG80} it was shown that $\mu_{\textrm{\upshape{uec}}}$ is a measure of noncompactness on the space $\mathcal{C}$ (Theorem 11.2).

Theorem \ref{AAQ}, our main result, generalizes Theorem \ref{AA} by linking $\mu_{\textrm{\upshape{H}}}$ and $\mu_{\textrm{\upshape{uec}}}$ quantitatively. The proof is deferred to section 3.

\begin{thm}\label{AAQ}(Arzel{\`a}-Ascoli for the Hausdorff measure of noncompactness)
For $\mathcal{F} \in \mathcal{B}_{\mathcal{C}}$ we have
\begin{displaymath}
\frac{1}{2} \mu_{\textrm{\upshape{uec}}}(\mathcal{F}) \leq \mu_{\textrm{\upshape{H}}}(\mathcal{F}) \leq  \left(\frac{N}{2N + 2}\right)^{1/2} \mu_{\textrm{\upshape{uec}}}(\mathcal{F})
\end{displaymath}
In particular, if $N = 1$, then
\begin{displaymath}
\mu_{\textrm{\upshape{H}}}(\mathcal{F}) = \frac{1}{2} \mu_{\textrm{\upshape{uec}}}(\mathcal{F}).
\end{displaymath}
\end{thm}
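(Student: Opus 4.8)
The plan is to establish the two inequalities separately; the left‑hand one is elementary, and the right‑hand one is where Jung's theorem does the work. Write $\kappa=\left(\frac{N}{2N+2}\right)^{1/2}$, so that Jung's theorem reads: every subset of $\mathbb{R}^N$ of diameter $\le D$ lies in a closed Euclidean ball of radius $\le\kappa D$. For the lower bound $\frac12\mu_{\mathrm{uec}}(\mathcal F)\le\mu_{\mathrm H}(\mathcal F)$, fix $r>\mu_{\mathrm H}(\mathcal F)$ and a finite set $\{g_1,\dots,g_n\}\subset\mathcal C$ with $\sup_{f\in\mathcal F}\min_i\|f-g_i\|_\infty<r$. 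Each $g_i$ is uniformly continuous on $[a,b]$, so for $\eta>0$ there is $\delta>0$ with $|g_i(x)-g_i(y)|<\eta$ whenever $|x-y|<\delta$, simultaneously for all $i$. If $f\in\mathcal F$ and $\|f-g_i\|_\infty<r$, then for $|x-y|<\delta$ one gets $|f(x)-f(y)|\le|f(x)-g_i(x)|+|g_i(x)-g_i(y)|+|g_i(y)-f(y)|<2r+\eta$; hence $\mu_{\mathrm{uec}}(\mathcal F)\le2r+\eta$, and letting $\eta\downarrow0$ and $r\downarrow\mu_{\mathrm H}(\mathcal F)$ finishes this (routine) direction.

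For the upper bound $\mu_{\mathrm H}(\mathcal F)\le\kappa\,\mu_{\mathrm{uec}}(\mathcal F)$ it suffices to show that, for every $\epsilon>0$, $\mathcal F$ is covered by finitely many closed balls of $\mathcal C$ of radius $\le\kappa\bigl(\mu_{\mathrm{uec}}(\mathcal F)+3\epsilon\bigr)$, since then $\mu_{\mathrm H}$ does not exceed that radius and $\epsilon$ is arbitrary. Put $m=\mu_{\mathrm{uec}}(\mathcal F)$, fix $\epsilon>0$, and by the definition of $\mu_{\mathrm{uec}}$ pick $\delta>0$ so small that $\sup_{f\in\mathcal F}\sup_{|x-y|<2\delta}|f(x)-f(y)|<m+\epsilon=:d$. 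Choose a partition $a=t_0<\dots<t_k=b$ of mesh $<\delta$ and set $I_j=[t_{j-1},t_j]$, so that for every $f\in\mathcal F$ and $j$ the set $f(I_j)\subset\mathbb R^N$ has $\operatorname{diam}f(I_j)\le d$ and even $\operatorname{diam}f(I_j\cup I_{j+1})\le d$. By Jung's theorem each $f(I_j)$ lies in a closed ball $\overline B(c_j(f),r_j(f))$ with $r_j(f)\le\kappa d$ and with $c_j(f)\in\overline{\mathrm{conv}}\,f(I_j)$, a fixed bounded set; note also $f(t_j)\in\overline B(c_j(f),r_j(f))\cap\overline B(c_{j+1}(f),r_{j+1}(f))$ and $|c_j(f)-c_{j+1}(f)|\le d$. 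Now partition $\mathcal F$ into finitely many cells according to a net of fineness $\le\epsilon$, in the product Hausdorff metric, of the data $\bigl(f(I_1),\dots,f(I_k)\bigr)$; such a net exists since these are compact subsets of a fixed bounded set. Given a cell $\mathcal G$, set $\mathcal E(x)=\overline{\{\,f(x):f\in\mathcal G\,\}}\subset\mathbb R^N$; then $\operatorname{diam}\mathcal E(x)\le d+2\epsilon$ for every $x$, so a Chebyshev centre $h(x)$ of $\mathcal E(x)$ satisfies $|z-h(x)|\le\kappa(d+2\epsilon)$ for all $z\in\mathcal E(x)$, and hence $\|f-h\|_\infty\le\kappa(d+2\epsilon)=\kappa(m+3\epsilon)$ for every $f\in\mathcal G$. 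Provided $h$ can be taken continuous, this places $\mathcal G$ in a single ball $\overline B_{\mathcal C}\bigl(h,\kappa(m+3\epsilon)\bigr)$, and running over the finitely many cells yields the required cover.

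The main obstacle is precisely the continuity of the central function $h$ together with controlling it without paying more than the Jung radius. The slice map $x\mapsto\mathcal E(x)$ is in general only lower semicontinuous — a "travelling steep part" of some $f\in\mathcal G$ can force a jump — so the naive pointwise Chebyshev centre need not lie in $\mathcal C$, and its jumps are a priori of order $d$, the very order we are trying to beat, so crudely mollifying $h$ would reintroduce an error of order $d$. The way around is to build $h$ from the local data: glue the constant pieces $c_j(\cdot)$, routing the interpolation through the common values $f(t_j)\in\overline B(c_j,r_j)\cap\overline B(c_{j+1},r_{j+1})$, and use that the choice $\sup_f\sup_{|x-y|<2\delta}|f(x)-f(y)|<d$ keeps the Chebyshev centres of adjacent subintervals close enough for the transition to cost nothing beyond $\kappa d$; correspondingly one refines the cell decomposition (netting also the radii $r_j(f)$ and, at a finer scale, the location of the steep parts) so that the envelopes $\mathcal E(x)$ genuinely vary continuously. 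Making this close with global error exactly the Jung constant $\kappa\,\mu_{\mathrm{uec}}(\mathcal F)$ rather than a larger multiple of it is the heart of the argument, and is where Jung's theorem on the Chebyshev radius is used in an essential way.

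Finally, when $N=1$ one has $\kappa=\left(\tfrac14\right)^{1/2}=\tfrac12$, so the upper and lower bounds coincide and $\mu_{\mathrm H}(\mathcal F)=\tfrac12\,\mu_{\mathrm{uec}}(\mathcal F)$, as asserted.
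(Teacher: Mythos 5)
Your lower bound $\tfrac12\mu_{\mathrm{uec}}(\mathcal F)\le\mu_{\mathrm H}(\mathcal F)$ is correct and is essentially the paper's own argument. The upper bound, however, is not complete. You correctly identify the crux --- the pointwise Chebyshev centre $h(x)$ of the slice $\mathcal E(x)$ need not depend continuously on $x$, and its jumps are a priori of the same order $d$ you are trying to beat --- but your proposed repair (``glue the constant pieces $c_j(\cdot)$, routing the interpolation through the common values $f(t_j)$'', ``refine the cell decomposition \dots so that the envelopes vary continuously'') is a description of what a proof would have to accomplish, not a proof. Concretely: with abutting intervals $I_j=[t_{j-1},t_j]$, the two balls $\overline B(c_j(f),\kappa d)$ and $\overline B(c_{j+1}(f),\kappa d)$ are only guaranteed to both contain $f(x)$ at the single point $x=t_j$, so there is no subinterval on which a linear transition from $c_j(f)$ to $c_{j+1}(f)$ can be certified to stay within $\kappa d$ of $f$; and netting the data $\bigl(f(I_1),\dots,f(I_k)\bigr)$ in the product Hausdorff metric does not make $x\mapsto\mathcal E(x)$ continuous (a steep part of a single $f\in\mathcal G$ still produces an order-$d$ jump of $\mathcal E$ at a point interior to a cell). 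You set up the ingredient that could fix this (you note $\operatorname{diam}f(I_j\cup I_{j+1})\le d$), but you never use it.

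The paper closes exactly this gap with \emph{overlapping} windows. Choosing $a=x_0<\dots<x_{2n+1}=b$ and $I_k=\left]x_{2k-1},x_{2k+2}\right[$, each window has diameter $<\delta$ but consecutive windows share the whole subinterval $[x_{2k+1},x_{2k+2}]$. One defines $\widetilde f$ to equal the Chebyshev centre $c_{f,k}$ of $f(I_k)$ on $[x_{2k},x_{2k+1}]$ and to interpolate linearly between $c_{f,k}$ and $c_{f,k+1}$ on the overlap; there $f(x)$ lies in \emph{both} balls $B^\star(c_{f,k},\kappa\alpha)$ and $B^\star(c_{f,k+1},\kappa\alpha)$, and Lemma \ref{lem:Tech1} shows the linear interpolant of two centres stays within $\kappa\alpha$ of every point of the intersection, so $\|\widetilde f-f\|_\infty\le\kappa\alpha$ with no loss beyond the Jung radius, and $\widetilde f$ is continuous by construction. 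Finiteness is then obtained not by netting slices of $\mathcal F$ but by snapping the finitely many nodes $\widetilde f(x_j)$ to a finite $\epsilon$-net $Y$ of $B^\star(0,3M)$ and re-interpolating, which by Lemma \ref{lem:Tech2} costs only an additional $\epsilon$. To salvage your scheme you would need to carry out this overlap-plus-interpolation step (or an equivalent continuous selection of centres with error exactly $\kappa d$); as written, what you yourself call ``the heart of the argument'' is missing.
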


\section{A preliminary result of Jung's} 

For a bounded set $A \subset \mathbb{R}^N$, the {\em diameter} is given by
\begin{displaymath}
\textrm{\upshape{diam}}(A) = \sup_{x,y \in A} \left|x-y\right|
\end{displaymath}
and the {\em Chebyshev radius} by
\begin{displaymath}
r(A) = \myinf_{x \in \mathbb{R}^N} \sup_{y \in A} \left|x - y\right|.
\end{displaymath}
It is well known that for each bounded set $A \subset \mathbb{R}^N$ there exists a unique $x_A \in \mathbb{R}^N$ such that 
\begin{displaymath}
\sup_{y \in A} \left|x_A - y\right| = r(A).
\end{displaymath}
The point $x_A$ is called the {\em Chebyshev center of $A$}. A good exposition of the previous notions in a general normed vector space can be found in \cite{H72}, section 33.

Theorem \ref{J} provides a relation between the diameter and the Chebyshev radius of a bounded set in $\mathbb{R}^N$. A beautiful proof can be found in \cite{BW41}. For extensions of the theorem we refer to \cite{A85}, \cite{AFS00}, \cite{R02} and \cite{NN06}.

\begin{thm}\label{J}(Jung)
For a bounded set $A \subset \mathbb{R}^N$ we have
\begin{displaymath}
\frac{1}{2} \textrm{\upshape{diam}}(A) \leq r(A) \leq \left(\frac{N}{2N + 2}\right)^{1/2} \textrm{\upshape{diam}}(A).
\end{displaymath}
\end{thm}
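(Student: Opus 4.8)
The plan is to prove the two inequalities separately, the lower bound being elementary and all the substance lying in the upper bound. For the lower bound $\frac{1}{2}\,\mathrm{diam}(A) \le r(A)$, I would invoke the triangle inequality directly: for any $x,y \in A$ and the Chebyshev center $x_A$ we have $|x-y| \le |x - x_A| + |x_A - y| \le 2\,r(A)$, and taking the supremum over $x,y \in A$ gives $\mathrm{diam}(A) \le 2\,r(A)$.

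For the upper bound I would first reduce to compact $A$ by replacing $A$ with its closure $\overline{A}$, which alters neither $\mathrm{diam}$ nor $r$; translating, I may assume the Chebyshev center is the origin. Write $r = r(A)$ and $d = \mathrm{diam}(A)$, and set $S = \{a \in A : |a| = r\}$, which is nonempty (by compactness, since $\sup_{a}|a| = r$ is attained) and compact. The key geometric claim, which I expect to be the main obstacle, is that $0 \in \mathrm{conv}(S)$. I would argue by contradiction using separation: if $0 \notin \mathrm{conv}(S)$, then since $\mathrm{conv}(S)$ is compact and convex, the separating hyperplane theorem produces a unit vector $v$ and $\alpha > 0$ with $\langle a, v\rangle \ge \alpha$ for all $a \in S$. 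A short compactness argument then shows that shifting the center to $tv$ strictly lowers the farthest distance for all small $t>0$: on a neighbourhood of $S$ one has $\langle a,v\rangle$ bounded below and hence $|a - tv|^2 = |a|^2 - 2t\langle a,v\rangle + t^2 < r^2$, while away from $S$ one has $|a| \le r - \eta$ for some $\eta > 0$, so $|a - tv| < r$ as well. This contradicts the minimality of $r$, proving the claim.

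With $0 \in \mathrm{conv}(S)$ in hand, Carath\'eodory's theorem yields points $a_0,\dots,a_m \in S$ with $m \le N$ and weights $\lambda_0,\dots,\lambda_m \ge 0$, $\sum_i \lambda_i = 1$, satisfying $\sum_i \lambda_i a_i = 0$. The final step is algebraic. Expanding $0 = |\sum_i \lambda_i a_i|^2 = \sum_{i,j} \lambda_i \lambda_j \langle a_i, a_j\rangle$ and inserting the polarization identity $\langle a_i, a_j\rangle = r^2 - \frac{1}{2}|a_i - a_j|^2$, which holds because $|a_i| = r$ for every $i$, the terms collapse via $\sum_i \lambda_i = 1$ to $r^2 = \frac{1}{2}\sum_{i,j}\lambda_i\lambda_j |a_i - a_j|^2$. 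Bounding $|a_i - a_j| \le d$ and using $\sum_{i \ne j}\lambda_i\lambda_j = 1 - \sum_i \lambda_i^2$ gives $r^2 \le \frac{d^2}{2}\bigl(1 - \sum_i \lambda_i^2\bigr)$. Finally, since there are at most $N+1$ weights, Cauchy--Schwarz gives $\sum_i \lambda_i^2 \ge \frac{1}{N+1}$, whence $r^2 \le \frac{d^2}{2}\cdot\frac{N}{N+1} = \frac{N}{2N+2}\,d^2$, and taking square roots delivers the asserted estimate.
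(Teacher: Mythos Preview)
Your argument is correct. The paper itself does not prove Theorem~\ref{J}; it merely states the result and refers the reader to \cite{BW41} for a proof, so there is no in-paper argument to compare against. What you have written is in fact one of the standard modern proofs of Jung's inequality: the lower bound via the triangle inequality, the reduction to compact $A$, the observation that the Chebyshev center lies in the convex hull of the contact set $S=\{a\in A:|a|=r\}$ (established by the usual separating-hyperplane perturbation argument), Carath\'eodory's theorem to extract at most $N+1$ contact points carrying the origin, and the polarization/Cauchy--Schwarz computation to bound $r^2$ by $\frac{N}{2N+2}d^2$. Each of these steps is sound as you have stated them; in particular, the compactness of $\mathrm{conv}(S)$ in $\mathbb{R}^N$ justifies the strict separation, and the uniform gap $|a|\le r-\eta$ on $A\setminus U$ follows from compactness of $A\setminus U$ together with the definition of $S$. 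There is nothing to add.
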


\section{proof of Theorem \ref{AAQ}}

We first need two simple lemmas on linear interpolation.

For $c_0 \in \mathbb{R}^N$ and $r \in \mathbb{R}_0^+$ we denote the {\em closed ball with center $c_0$ and radius $r$} as $B^\star(c_0,r)$.

\begin{lem}\label{lem:Tech1}
Consider $c_1,c_2 \in \mathbb{R}^N$ and $r \in \mathbb{R}^+_0$ and assume that $B^\star(c_1,r) \cap B^\star(c_2,r) \neq \emptyset.$ Let $L$ be the $\mathbb{R}^N$-valued map on the compact interval $\left[\alpha,\beta\right]$ defined by 
\begin{displaymath}
L(x) = \frac{\beta - x}{\beta - \alpha} c_1 + \frac{x - \alpha}{\beta - \alpha} c_2.
\end{displaymath}
Then, for all $x \in \left[\alpha,\beta\right]$ and $y \in B^\star(c_1,r) \cap B^\star(c_2,r)$,
\begin{displaymath}
\left|L(x) - y\right| \leq r.
\end{displaymath}
\end{lem}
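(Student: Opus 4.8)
The plan is to exploit the convexity of closed balls in Euclidean space, which for this lemma is really all that is going on. Fix $x \in \left[\alpha,\beta\right]$ and $y \in B^\star(c_1,r) \cap B^\star(c_2,r)$. First I would set $\lambda = \frac{\beta - x}{\beta - \alpha}$, observe that $\lambda \in \left[0,1\right]$ and $1 - \lambda = \frac{x - \alpha}{\beta - \alpha}$, so that $L(x) = \lambda c_1 + (1 - \lambda) c_2$ is a convex combination of $c_1$ and $c_2$.

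Next I would write $L(x) - y = \lambda (c_1 - y) + (1 - \lambda)(c_2 - y)$ and apply the triangle inequality together with the positive homogeneity of $\left|\cdot\right|$ to obtain $\left|L(x) - y\right| \leq \lambda \left|c_1 - y\right| + (1 - \lambda)\left|c_2 - y\right|$. Since $y \in B^\star(c_1,r)$ and $y \in B^\star(c_2,r)$, both $\left|c_1 - y\right|$ and $\left|c_2 - y\right|$ are at most $r$, whence $\left|L(x) - y\right| \leq \lambda r + (1 - \lambda) r = r$, which is the asserted estimate.

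I expect no genuine obstacle here: the computation is nothing more than the remark that the closed ball $B^\star(y,r)$ is convex and contains both endpoints $c_1$ and $c_2$, hence contains the whole segment $\{L(x) : x \in \left[\alpha,\beta\right]\}$. The hypothesis that $B^\star(c_1,r) \cap B^\star(c_2,r) \neq \emptyset$ is not used in the estimate itself; it only ensures that the conclusion is non-vacuous, and it is precisely this nonemptiness that will make the lemma applicable in the proof of Theorem \ref{AAQ}, where $y$ will be taken to be a suitable Chebyshev center.
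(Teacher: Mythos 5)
Your proof is correct and is essentially identical to the paper's: both write $L(x)-y$ as the convex combination $\lambda(c_1-y)+(1-\lambda)(c_2-y)$, apply the triangle inequality, and bound each term by $r$. Your closing remark about the nonemptiness hypothesis merely guaranteeing non-vacuity is also accurate.
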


\begin{proof}
The calculation
\begin{eqnarray*}
\left|L(x) - y\right| &=& \left|\frac{\beta - x}{\beta - \alpha} (c_1 - y) + \frac{x - \alpha}{\beta - \alpha} (c_2 - y)\right|\\
&\leq& \frac{\beta - x}{\beta - \alpha} \left|c_1 - y\right| + \frac{x - \alpha}{ \beta - \alpha} \left|c_2 - y\right|\\
&\leq& \frac{\beta - x}{\beta - \alpha} r + \frac{x - \alpha}{\beta - \alpha} r\\
&=& r
\end{eqnarray*}
proves the lemma.
\end{proof}

\begin{lem}\label{lem:Tech2}
Consider $c_1,c_2,y_1,y_2 \in \mathbb{R}^N$ and $\epsilon > 0$ and suppose that $\left|c_1 - y_1\right| \leq \epsilon$ and $\left|c_2 - y_2\right| \leq \epsilon$. Let $L$ and $M$ be the $\mathbb{R}^N$-valued maps on the compact interval $\left[\alpha,\beta\right]$ defined by
\begin{displaymath}
L(x) = \frac{\beta - x}{\beta - \alpha} c_1 + \frac{x - \alpha}{\beta - \alpha} c_2
\end{displaymath}
and
\begin{displaymath}
M(x) = \frac{\beta - x}{\beta - \alpha} y_1 + \frac{x - \alpha}{\beta - \alpha} y_2.
\end{displaymath}
Then 
\begin{displaymath}
\|L - M\|_\infty \leq \epsilon.
\end{displaymath}
\end{lem}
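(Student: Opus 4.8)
The plan is to repeat, almost verbatim, the convexity estimate from the proof of Lemma~\ref{lem:Tech1}. First I would fix an arbitrary $x \in \left[\alpha,\beta\right]$ and subtract the two defining formulas, grouping the terms sharing the same interpolation coefficient, so that
\begin{displaymath}
L(x) - M(x) = \frac{\beta - x}{\beta - \alpha}\left(c_1 - y_1\right) + \frac{x - \alpha}{\beta - \alpha}\left(c_2 - y_2\right).
\end{displaymath}
Next I would apply the triangle inequality for the Euclidean norm together with the hypotheses $\left|c_1 - y_1\right| \leq \epsilon$ and $\left|c_2 - y_2\right| \leq \epsilon$, observing that for $x \in \left[\alpha,\beta\right]$ the coefficients $\frac{\beta - x}{\beta - \alpha}$ and $\frac{x - \alpha}{\beta - \alpha}$ are nonnegative and add up to $1$; this collapses the bound to $\left|L(x) - M(x)\right| \leq \epsilon$.

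Since this estimate is uniform in $x$, taking the supremum over $\left[\alpha,\beta\right]$ immediately yields $\|L - M\|_\infty \leq \epsilon$. There is essentially no obstacle here: the only point requiring a moment's care is that the two interpolation weights form a convex combination, which is exactly what the restriction $x \in \left[\alpha,\beta\right]$ guarantees; outside this interval the argument would fail.
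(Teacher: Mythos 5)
Your argument is exactly the paper's proof: group the difference as $\frac{\beta - x}{\beta - \alpha}(c_1 - y_1) + \frac{x - \alpha}{\beta - \alpha}(c_2 - y_2)$, apply the triangle inequality, and use that the two weights form a convex combination to bound $|L(x) - M(x)|$ by $\epsilon$ uniformly in $x$. Correct and identical in approach.
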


\begin{proof}
The calculation
\begin{eqnarray*}
\left|L(x) - M(x)\right| &=& \left|\frac{\beta - x}{\beta - \alpha} (c_1 - y_1) + \frac{x - \alpha}{\beta - \alpha} (c_2 - y_2)\right|\\
&\leq& \frac{\beta - x}{\beta - \alpha} \left|c_1 - y_1\right| + \frac{x - \alpha}{ \beta - \alpha} \left|c_2 - y_2\right|\\
&\leq& \frac{\beta - x}{\beta - \alpha} \epsilon + \frac{x - \alpha}{\beta - \alpha} \epsilon\\
&=& \epsilon
\end{eqnarray*}
proves the lemma.
\end{proof}

\begin{proof}(of Theorem \ref{AAQ})
Let $\mathcal{F} \in \mathcal{B}_{\mathcal{C}}$.

We first show that
\begin{displaymath}
\mu_{\textrm{\upshape{H}}}(\mathcal{F}) \leq \left(\frac{N}{2N + 2}\right)^{1/2} \mu_{\textrm{\upshape{uec}}}(\mathcal{F}).
\end{displaymath}
Fix $\epsilon > 0$. Then, $\mathcal{F}$ being uniformly bounded, we can take a constant $M> 0$ such that 
\begin{eqnarray}
\forall f \in \mathcal{F}, \forall x \in \left[a,b\right] : \left|f(x)\right| \leq M.\label{bdd}
\end{eqnarray}
Pick a finite set $Y \subset \mathbb{R}^N$ for which
\begin{eqnarray}
\forall z \in B^\star(0,3M), \exists y \in Y : \left|y - z\right| \leq \epsilon.\label{YDense}
\end{eqnarray}
Now let $0 < \alpha \leq 2M$ be so that $\mu_{\textrm{\upshape{uec}}}(\mathcal{F}) < \alpha$, i.e. there exists $\delta > 0$ for which 
\begin{eqnarray}
\forall f \in \mathcal{F}, \forall x,y \in \left[a,b\right] : \left|x - y\right| <  \delta \Rightarrow \left|f(x) - f(y)\right| \leq \alpha.\label{eq:eqct}
\end{eqnarray}
Then choose points
\begin{displaymath}
a = x_0 < x_1 < \ldots < x_{2n} < x_{2n + 1} = b,
\end{displaymath}
put
\begin{eqnarray*}
I_0 &=& \left[0,x_2\right[,\\
I_k &=& \left]x_{2k - 1}, x_{2k + 2}\right[ \textrm{ if } k \in \left\{1,\ldots,n-1\right\},\\
I_n &=& \left]x_{2n - 1}, x_{2n + 1}\right]
\end{eqnarray*}
and assume that we have made this choice such that
\begin{eqnarray}
\forall k \in \left\{0, \ldots, n\right\} : \textrm{\upshape{diam}}(I_k) < \delta.\label{eq:diam}
\end{eqnarray}
Furthermore, for each $(y_0, \ldots, y_{2n+ 1}) \in Y^{2n+2}$, let  $L_{(y_0, \ldots, y_{2n+1})}$ be the $\mathbb{R}^N$-valued map on $\left[a,b\right]$ defined by
\begin{eqnarray*}
L_{(y_0, \ldots, y_{2n+1})}(x) = \left\{\begin{array}{clrrrrrrrrrrrrrr}      
\frac{x_1 - x}{x_1 - x_0} y_0 + \frac{x - x_0}{x_1 - x_0} y_1 &\textrm{if}&  x \in \left[x_0,x_1\right]\\       
\vdots\\
\frac{x_{k+1} - x}{x_{k+1} - x_k} y_k + \frac{x - x_k}{x_{k+1} - x_k} y_{k + 1} & \textrm{if}& x \in \left[x_{k},x_{k+1}\right]\\
\vdots\\
\frac{x_{2n+1} - x}{x_{2n+1} - x_{2n}} y_{2n} + \frac{x - x_{2n}}{x_{2n+1} - x_{2n}} y_{2n + 1} &\textrm{if}& x \in \left[x_{2n},x_{2n + 1}\right]
\end{array}\right.
\end{eqnarray*}
and put
\begin{displaymath}
\mathcal{F}_0 = \left\{L_{(y_0,\ldots,y_{2n+1})} \mid (y_0, \ldots, y_{2n + 1}) \in Y^{2n + 2}\right\}.
\end{displaymath}
Then $\mathcal{F}_0$ is a finite subset of $\mathcal{C}$. Now fix $f \in \mathcal{F}$ and let $c_{f,k}$ stand for the Chebyshev center of $f(I_k)$ for each $k \in \left\{0,\ldots, n\right\}$. It follows from (\ref{eq:eqct}) and (\ref{eq:diam}) that $\textrm{\upshape{diam}}f(I_k) \leq \alpha$ and thus, by Theorem \ref{J}, 
\begin{eqnarray}
\forall k \in \left\{0,\ldots,n\right\} : \sup_{x \in I_k} \left|c_{f,k} - f(x)\right| \leq \left(\frac{N}{2N + 2}\right)^{1/2} \alpha.\label{eq:JApp}
\end{eqnarray}
Let $\widetilde{f}$ be the $\mathbb{R}^N$-valued map on $\left[a,b\right]$ defined by
\begin{displaymath}
\widetilde{f}(x) = \left\{\begin{array}{clrrrrrrrrrrrrrr}      
c_{f,0}& \textrm{if}&  x \in \left[x_0,x_1\right]\\       
\frac{x_2 - x}{x_2 - x_1} c_{f,0}+ \frac{x - x_1}{x_2 - x_1} c_{f,1}& \textrm{if}& x \in \left[x_1,x_2\right]\\
c_{f,1} & \textrm{if}& x \in \left[x_2,x_3\right[\\
\frac{x_4 - x}{x_4 - x_3} c_{f,1}+ \frac{x - x_3}{x_4 - x_3} c_{f,2} &\textrm{if}& x \in \left[x_3,x_4\right]\\
\vdots\\
\frac{x_{2k} - x}{x_{2k} - x_{2k-1}} c_{f,k-1}+ \frac{x - x_{2k-1}}{x_{2k} - x_{2k-1}} c_{f,k} & \textrm{if}& x \in \left[x_{2k-1},x_{2k}\right]\\
c_{f,k} & \textrm{if}& x \in \left[x_{2k},x_{2k+1}\right]\\
\frac{x_{2 k + 2} - x}{x_{2 k + 2} - x_{2 k + 1}} c_{f,k}+ \frac{x - x_{2 k + 1}}{x_{2 k + 2} - x_{2 k + 1}} c_{f,k+1} &\textrm{if}& x \in \left[x_{2k+1},x_{2k+2}\right]\\
\vdots\\
\frac{x_{2 n - 2} - x}{x_{2 n - 2} - x_{2 n - 3}} c_{f,n - 2}+ \frac{x - x_{2n - 3}}{x_{2n - 2} - x_{2n - 3}} c_{f,n - 1} &\textrm{if}& x \in \left[x_{2n-3},x_{2n-2}\right]\\
c_{f,n-1} & \textrm{if}& x \in \left[x_{2n-2},x_{2n-1}\right]\\
\frac{x_{2 n} - x}{x_{2 n} - x_{2 n - 1}} c_{f,n-1}+ \frac{x - x_{2 n -1}}{x_{2 n} - x_{2 n - 1}} c_{f,n}  &\textrm{if}& x \in \left[x_{2n-1},x_{2n}\right]\\
c_{f,n} & \textrm{if}& x \in \left[x_{2n},x_{2n+1}\right]
\end{array}\right..
\end{displaymath}
Then (\ref{eq:JApp}) and Lemma \ref{lem:Tech1} learn that 
\begin{eqnarray}
\|\widetilde{f} - f\|_{\infty} \leq \left(\frac{N}{2N + 2}\right)^{1/2} \alpha.\label{ftcf} 
\end{eqnarray} 
Also, it easily follows from (\ref{bdd}) and (\ref{eq:JApp}) that $\|\widetilde{f}\|_{\infty} \leq 3M$ and thus (\ref{YDense}) allows us to choose $\left(y_0, \ldots, y_{2n + 1}\right) \in Y^{2n + 2}$ such that 
\begin{eqnarray}
\forall k \in \left\{0,\ldots, 2n + 1\right\} : \left|y_k - \widetilde{f}(x_k)\right| \leq \epsilon.\label{ykc}
\end{eqnarray}
Combining (\ref{ykc}) and Lemma \ref{lem:Tech2} reveals that
\begin{eqnarray}
\|L_{(y_0,\ldots,y_{2n + 1})} - \widetilde{f}\|_{\infty} \leq \epsilon.\label{ltft}
\end{eqnarray}
But then we have found $L_{(y_0,\ldots,y_{2n + 1})}$ in $\mathcal{F}_0$ for which, by (\ref{ftcf}) and (\ref{ltft}),
\begin{displaymath}
\|L_{(y_0,\ldots,y_{2n+1})} - f\|_\infty \leq \left(\frac{N}{2N + 2}\right)^{1/2} \alpha + \epsilon
\end{displaymath}
which, by the arbitrariness of $\epsilon$, entails that $\mu_{\textrm{\upshape{H}}}(\mathcal{F}) \leq \left(\frac{N}{2N + 2}\right)^{1/2} \alpha$ and thus, by the arbitrariness of $\alpha$, the inequality 
\begin{displaymath}
\mu_{\textrm{\upshape{H}}}(\mathcal{F}) \leq  \left(\frac{N}{2N + 2}\right)^{1/2} \mu_{\textrm{\upshape{uec}}}(\mathcal{F})
\end{displaymath}
is established.

We now prove that 
\begin{displaymath}
\frac{1}{2} \mu_{\textrm{\upshape{uec}}}\left(\mathcal{F}\right) \leq  \mu_{\textrm{\upshape{H}}}\left(\mathcal{F}\right). 
\end{displaymath}
Let $\alpha > 0$ be so that $\mu_{\textrm{\upshape{H}}}\left(\mathcal{F}\right) < \alpha$. Then there exists a finite set $\mathcal{F}_0 \subset \mathcal{C}$ such that for all $f \in \mathcal{F}$ there exists $g \in \mathcal{F}_0$ for which $\|g - f\|_\infty \leq \alpha$. Take $\epsilon > 0$. Since $\mathcal{F}_0$ is uniformly equicontinuous there exists $\delta > 0$ so that 
\begin{eqnarray}
\forall g \in \mathcal{F}_0, \forall x, y \in \left[a,b\right] : \left|x - y\right| < \delta \Rightarrow \left|g(x) - g(y)\right| \leq \epsilon.\label{eq:FzEC}
\end{eqnarray}
Now, for $f \in \mathcal{F}$, choose $g \in \mathcal{F}_0$ such that 
\begin{eqnarray}
\|g - f\|_\infty \leq \alpha.\label{eq:2gCloseTof}
\end{eqnarray}
Then, for $x , y \in \left[a,b\right]$ with $\left|x - y\right| < \delta$, we have, by (\ref{eq:FzEC}) and (\ref{eq:2gCloseTof}),
\begin{displaymath}
\left|f(x) - f(y)\right| \leq \left|f(x) - g(x)\right| + \left|g(x) - g(y)\right| + \left|g(y) - f(y)\right| \leq 2 \alpha + \epsilon
\end{displaymath}
which, by the arbitrariness of $\epsilon$ reveals that  $\mu_{\textrm{\upshape{uec}}}\left(\mathcal{F}\right) \leq 2\alpha$ and thus, by the arbitrariness of $\alpha$, the inequality 
\begin{displaymath}
\frac{1}{2} \mu_{\textrm{\upshape{uec}}}\left(\mathcal{F}\right) \leq \mu_{\textrm{\upshape{H}}}\left(\mathcal{F}\right)
\end{displaymath}
holds.
\end{proof}

\end{document}